\newcommand{\ind}{\mathbbm{1}}
\newtheorem{theorem}{Theorem}[section]
\theoremstyle{definition}
\newtheorem{definition}[theorem]{Definition}
\newtheorem{rmk}[theorem]{Remark}
\def \leq{\leqslant}
\def \geq{\geqslant}
\newcommand{\eqdef}{\mathrel{=}:}
\newcommand{\beq} {\begin{eqnarray*}}
\newcommand{\eeq} {\end{eqnarray*}}
\newcommand{\trm} {\textrm}
\def \R{\mathbb{R}}
\def \ZZ{\mathbf{Z}}
\def \E{\mathbb{E}}
\def \P{\mathbb{P}}
\def \Var{\hbox{{\rm Var}}}
\def \Cov{\hbox{{\rm Cov}}}
\newcommand{\z}{\mathbf{z}}
\newcommand{\cvloi}{\overset{\mathcal{L}}{\underset{N\to\infty}{\longrightarrow}}}
\newcommand{\M}{M}
\newcommand{\tr}{\mbox{tr}}
\title{Sensitivity analysis in general metric spaces} 
\author[1]{Fabrice Gamboa}
\author[2]{Thierry Klein} 
\author[3]{Agn\`es Lagnoux}
\author[4]{Leonardo Moreno}
\affil[1]{Institut de Math\'ematiques de Toulouse and ANITI; UMR5219. Université de Toulouse; CNRS. UT3, F-31062 Toulouse, France.}
\affil[2]{Institut de Math\'ematiques de Toulouse; UMR5219. Université de Toulouse; ENAC - Ecole Nationale de l'Aviation Civile, Universit\'e de Toulouse, France}
\affil[3]{Institut de Math\'ematiques de Toulouse; UMR5219. Université de Toulouse; CNRS. UT2J, F-31058 Toulouse, France.}
\affil[4]{Departamento de Métodos Cuantitativos, FCEA, Universidad de la República, Uruguay}
\begin{document}

\maketitle
\begin{abstract}
Sensitivity indices are commonly used to quantity the relative influence of any specific group of input variables on the output of a computer code. In this paper, we introduce new sensitivity indices adapted to outputs valued in general metric spaces. This new class of indices encompasses the classical ones; in particular, the so-called Sobol indices and the Cramér-von-Mises indices. Furthermore, we provide asymptotically Gaussian estimators of these indices based on U-statistics. Surprisingly, we prove the asymptotic normality straightforwardly. 
Finally, we illustrate this new procedure on a toy model and on two real-data examples.
\end{abstract}

{\bf Keywords}: Sensitivity analysis, Cram\'er-von-Mises distance, Pick-Freeze method, U-statistics, general metric spaces.

\section{Introduction}

In the last decades, the use of computer code experiments to model physical phenomena has become a recurrent task for many applied researchers and engineers. In such simulations, it is crucial to understand the global influence
of  one or several input variables on  the  output of the system. When considering these inputs as random elements, this   problem  is  generally  called  (global)
sensitivity analysis. We refer, for example to \cite{rocquigny2008uncertainty} or \cite{saltelli-sensitivity}
 for an overview on practical aspects of sensitivity analysis.

One of the most popular indicator to quantify the influence of some inputs is the so-called Sobol index. This index was first introduced in \cite{pearson1915partial} and then considered by \cite{sobol2001global}. It is well tailored when the output space is $\R$. It compares using the so-called Hoeffding decomposition (see \cite{Hoeffding48}) the conditional variance of the output (knowing some of the input variables) with the total variance of the output. Many different estimation procedures of the Sobol indices have been proposed and studied in the literature. Some are based on Monte-Carlo or quasi Monte-Carlo design of experiments (see \cite{Kucherenko2017different,owen2} and references therein for more details). More recently a method based on nested Monte-Carlo \cite{GODA201763} has been developed. In particular, an efficient estimation of the Sobol indices can be performed through the so-called Pick-Freeze method. For the description of this method and its theoretical study (consistency, central limit theorem, concentration inequalities and Berry- Esseen bounds), we refer to \cite{pickfreeze,janon2012asymptotic} and references therein. Some other estimation procedures are based on different designs of experiments using for example polynomial chaos expansions (see \cite{Sudret2008global} and the reference therein for more details).

The case of vectorial outputs was first studied in \cite{lamboni2011multivariate} and tackled using principal component analysis. In \cite{GJKL14}, the authors recover the indices proposed in \cite{lamboni2011multivariate} and showed that in some sense they are the only reasonable generalization of the classical Sobol indices in dimension greater than 2. Moreover, they provide the theoretical study of the Pick-Freeze estimators and extend their definition to the case of outputs valued in a separable Hilbert space.

Since Sobol indices are based on the variance through the Hoeffding decomposition, they only quantify the input influence on the mean value of the computer code. Many authors proposed other ways to compare the conditional distribution of the output knowing some of the inputs to the distribution of the output. In \cite{Owen12,ODC13}, the authors considered higher moments to define new indices, whereas in \cite{borgonovo2007,borgonovo2011moment, borgonovo2016common,DaVeiga13}, divergences or distances between measures are used. In \cite{FKR13,kala2019quantile}, the authors used contrast functions to build goal-oriented indices. Although these works defined nice theoretical indices, the existence of an efficient statistical estimation procedure is still in most cases an open question. The case of vectorial-valued computer codes is considered in \cite{GKL18} where a sensitivity index based on the whole distribution of the output utilizing the Cram\'er-von-Mises distance is defined. The authors showed that the Pick-Freeze estimation procedure can be used providing an asymptotically Gaussian estimator of the index. This scheme requires $3N$ evaluations of the output code and leads to a convergence rate of order $\sqrt N$. This approach has been generalized in \cite{FGM2017}, where the authors considered computer codes valued in a compact Riemannian manifold. Once again, they used the Pick-Freeze scheme to provide a consistent estimator of their index, requiring $4N$ evaluations of the output. Unfortunately, no central limit theorem was proved.
 
In this work, we build general indices for a code valued in a metric space and we provide an asymptotically Gaussian estimator based on U-statistics requiring only  $2N$ evaluations of the output code while keeping a convergence rate of  $\sqrt N$. In addition, we explain that all the indices studied in \cite{FGM2017,GJKL14,pickfreeze,GKL18,janon2012asymptotic} can be seen as particular cases of our framework. Hence, we improve the estimation scheme of \cite{GKL18} and \cite{FGM2017} by reducing to $2N$ the number of evaluations of the code. Last but not least, using the results of Hoeffding \cite{Hoeffding48} on U-statistics, the asymptotic normality is proved straightforwardly.

The paper is organized as follows. Section \ref{sec:setting} is dedicated to the definition of the new indices and the presentation of their estimation via U-statistics. In Section \ref{sec:appli}, we recover the classical indices classically used in global sensitivity analysis. Furthermore, we extend the work of \cite{FGM2017} and establish the central limit theorem that was not yet proved. We illustrate the procedure in Section \ref{sec:num} on a toy example and on two real-data models. The first application is about the Gaussian plume model and consists in quantifying the sensitivity of the contaminant concentration with respect
to some input parameters. Second, an elliptical  differential partial equation of diffusive transport type is considered. In this setting, we proceed to the singular value decomposition of the solution and we perform a sensitivity analysis of the orthogonal matrix produced by the decomposition with respect to the equation parameters. Finally, some conclusions are given in Section \ref{sec:concl}.

\section{General setting}\label{sec:setting}

\subsection{The Cramér-von-Mises indices}

The main idea of Cramér-von-Mises indices is to compare the conditional cumulative distribution function (c.d.f.) to the unconditional one via some distance. The supremum norm is used in \cite{borgonovo289probabilistic} while in \cite{GKL18} the $L^2$-norm is chosen. The previous approaches are global. A local approach is considred in \cite{luyi2012moment}.

Here, we consider a measurable function $f$ (black-box code) defined on $E= E_1\times E_2\times \dots \times E_p$ and valued in a separable metric space $(\mathcal X,d)$. Here, $(E_1,\mathcal A_1)$, $\cdots$, $(E_p,\mathcal A_p)$ are measurable spaces. The output denoted by $Z$ is  then given by
\begin{equation}\label{def:model}
Z=f(X_1,\dots, X_p),
\end{equation}
where $X_i$ is a random element of $E_i$ and $X_1,\dots, X_p$ are assumed to be mutually independent. Naturally, we assume that all the random variables are defined on the same probability space $(\Omega, \mathcal A,\P)$ and $\omega \mapsto (X_1(\omega),  \ldots , X_p(\omega))$ is a measurable application from $\Omega$ to $E$. 

In \cite{GKL18}, the authors studied, for $\mathcal X=\R^k$, global sensitivity indices of $Z$ with respect to the inputs $X_1$,$\ldots$, $X_p$ based on its whole distribution (instead of considering only its second moment as done usually via the so-called Sobol indices). Those indices are based on the Cramér-von-Mises distance. To do so, they introduced a family of test functions parameterized by a single index $t=(t_1,\ldots,t_k)\in \R^k$ and defined by
\begin{align*}
T_t(Z)=\ind_{\{Z\leqslant t\}}=\ind_{\{Z_1\leqslant t_1, \ldots, Z_k\leqslant t_k\}}.
\end{align*}

More precisely, let $\bf u$ be a subset of $I_p=\{1,\ldots,p\}$ and let $\sim \textbf{u}$ be its complementary in $I_p$ ($\sim \textbf{u}=I_p\setminus \textbf{u}$). We define $X_{\bf u}=(X_i)_{i\in \textbf{u}}$. For $t=(t_1,\ldots, t_k)\in\R^k$, let also $F$ be the distribution function of $Z$:
\[
F(t)=\P\left(Z\leqslant t\right)=\E\left[\ind_{\{Z\leqslant t\}}\right],
\]
and $F^{\textbf{u}}$ be the conditional distribution function of $Z$ conditionally on $X_{\textbf{u}}$:
\[
F^{\textbf{u}}(t)=\P\left(Z\leqslant t|X_{\textbf{u}}\right)=\E\left[\ind_{\{Z\leqslant t\}}|X_{\textbf{u}}\right].
\]
Obviously, for any $t\in \R^k$, $F^{\textbf{u}}(t)$ is a random variable depending only on $t$ and $X^{\textbf u}$, the expectation of which is $\E\left[F^{\textbf{u}}(t)\right]=F(t)$. Since for any fixed $t\in \R^{k}$, $T_t(Z)$ is a real-valued random variable, we can perform its Hoeffding decomposition with respect to $\textbf u$ and $\sim \textbf u$:
\begin{align*}
T_t(Z)=F(t)+((F^u(t)-F(t))+(F^{\sim u}(t)-F(t)) + R_t(X^u,X^{\sim u}),
\end{align*}
where 
\[
R_t(X^u,X^{\sim u})=T_t(Z)-\E[Y(t)T_t(Z)]-\left(\E[T_t(Z)|X_{\textbf{u}}]-\E[T_t(Z)]\right)-\left(\E[T_t(Z)|X_{\sim \textbf{u}}]-\E[T_t(Z)]\right)
\]
leading to 
\begin{align}
\Var(T_t(Z))&=F(t)(1-F(t))\nonumber\\
&= \E\left[\left(F^{\textbf{u}}(t)-F(t)\right)^{2}\right]+\E\left[\left(F^{\sim \textbf{u}}(t)-F(t)\right)^{2}\right]+\Var(R_t(X^u,X^{\sim u})). \label{eq:decomp_var}
\end{align}

Then, the Cram\'er-von-Mises index is obtained by integrating over $t$  with respect to the distribution of the output code $Z$ and normalizing by the integrated total variance :
\begin{align}\label{CVM}
S_{2,CVM}^{\textbf{u}}=\frac{\int_{\R^{k}}\E\left[\left(F(t)-F^{\textbf{u}}(t)\right)^2\right]dF(t)}{\int_{\R^k} F(t)(1-F(t))dF(t)}.
\end{align}

In this example, the collection of the test functions $T_t(Z)=\ind_{\{Z\leqslant t\}}$ ($t\in \R^k$) is parameterized by a single vectorial  parameter $t$. Since the knowledge of the c.d.f.\ of $Z$: $F(t)=\E[\ind_{\{Z\leqslant t\}}]=\P(Z\leqslant t)$ characterizes its distribution, the index $S_{2,CVM}^{\textbf{u}}$ depends
as expected on the whole distribution of the output computer code. 
Using the Pick-Freeze methodology, the authors of \cite{GKL18} proposed an estimator which requires $3N$ evaluations of the output code leading to a convergence rate of $\sqrt N$.

This approach has been generalized in \cite{FGM2017} to compact Riemannian manifolds replacing the indicator function of half-spaces $\ind_{\{Z\leqslant t\}}$ parameterized by $t$
by the indicator function of balls $\ind_{\{Z\in \widetilde B(a_1,a_2)\}}$ indexed by two parameters $a_1$ and $a_2$. In this last work, $\widetilde B(a_1,a_2)$ stands for the ball whose center is the middle point between $a_1$ and $a_2$ with radius $\overline{a_1a_2}/2$. Therein a consistent estimation scheme based on $4N$ evaluations of the function is proposed. Nevertheless, the convergence rate of the estimator is not studied.

Now we aim at generalizing this methodology to any separable metric spaces and to any  class of test functions parameterized by a fixed number of elements of the metric space. 

\subsection{The general metric space sensitivity indices}\label{ssec:index}

Recall that $Z$ lives in the space $\mathcal X$. Generalizing the previous approach, we consider a family of test functions parameterized by  $m\geqslant 1$ elements of $\mathcal X$. For any $a=(a_i)_{i=1,\dots,m}\in \mathcal X^m$, we consider the test functions
 \[
\begin{matrix}
& \mathcal X^{m}\times \mathcal X & \to & \R\\
& (a,x) & \mapsto & T_a(x).\\
\end{matrix}
\]
We assume that $T_a(\cdot{})\in L^2(\P^{\otimes m}\otimes \P)$ where $\P$ denotes the distribution of $Z$.
Performing the Hoeffding decomposition on each test function $T_a(\cdot{})$ and then integrating with respect to $a$ using $\P^{\otimes m}$ leads to the definition of our new index. 

\begin{definition}
The \emph{general metric space sensitivity index}  with respect to $\textbf{u}$ is defined by
 \begin{align}\label{eq:GIM}
S_{2,GMS}^{\textbf{u}}=\frac{\int_{\mathcal X^m}\E\left[\left(\E[T_a(Z)]-\E[T_a(Z)|X_{\textbf{u}}]\right)^{2}\right]d\P^{\otimes m}(a)}{\int_{\mathcal X^m} \Var(T_a(Z))d\P^{\otimes m}(a)},
\end{align}
with $X_{\bf u}=(X_i)_{i\in \textbf{u}}$.
\end{definition}
Observe that the different contributions $S_{2,GMS}^{\textbf{u}}$, $S_{2,GMS}^{\sim \textbf{u}}$ and the integrated remaining term (see \eqref{eq:decomp_var}) sum to 1.

\paragraph{Particular examples}

By convention, when the test functions $T_a$ do not depend on $a$, we set $m=0$. 

\begin{enumerate}
\item For $\mathcal X=\R$, $m=0$, and $T_a$ given by $T_a(x)=x$,  one recovers the classical Sobol indices (see \cite{sobol1993,sobol2001global}). In this case, it appears that the parameterized test functions do not depend on the parameter $a$.  For $\mathcal X=\R^k$ and $m=0$, one can recover the index defined for vectorial outputs in \cite{GJKL14,lamboni2011multivariate} by extending \eqref{eq:GIM}. 
\item For $\mathcal X=\R^k$, $m=1$, and $T_a$ given by $T_a(x)=\ind_{\{x\leqslant a\}}$,  one recovers the index based on the Cram\'er-von-Mises distance defined in \cite{GKL18} and recalled in \eqref{CVM}. 
\item Consider that $\mathcal X=\mathcal M$ is a manifold, $m=2$ and $T_a$ is given by $T_a(x)=\ind_{\{x\in \widetilde B(a_1,a_2)\}}$, where $\widetilde B(a_1,a_2)$ stands for the ball whose center is the middle point between $a_1$ and $a_2$ with radius $\overline{a_1a_2}/2$. Here, one recovers the index  defined in \cite{FGM2017}. 
\end{enumerate}

\begin{rmk} 
The previous two first examples can be seen as particular cases of what is called Common Rationale in \cite{borgonovo2016common}. More precisely, the first-order Sobol index with respect to $X_i$ corresponds to the index $\eta_i$ in \cite[Equation (4)]{borgonovo2016common} while the Cramér-von-Mises index with respect to $X_i$ is based on the distance between the c.d.f.\ $F$ of $Z$ and its conditional version $F^i$ with respect to $X_i$. Actually, in our construction, as soon as the class of test functions $T_a$ characterizes the distribution, the index becomes a particular case of the Common Rationale.

Analogously, the authors of \cite{borgonovo2016common} also consider as particular cases the expectation of the $L^1$-distance between the p.d.f.\ of $Z$ and its conditional version with respect to $X_i$ (index $\delta_i$ in \cite[Equation (12)]{borgonovo2016common} and the expectation of the $L^\infty$-distance between
$F$ of $Z$ and its conditional version $F^i$ (index $\beta_i$ in \cite[Equation (13)]{borgonovo2016common}). Notice that  the integration in $\delta_i$ is done with respect to the Lebesgue measure whereas the integration in our general metric space sensitivity index $S_{2,GMS}^{\textbf{u}}$ in \eqref{eq:GIM} is done with respect to the distribution of the output $Z$. The benefit is twofold. First, the integral always exists. Second, such an integration weights the support of the output
distribution.
\end{rmk}

\subsection{Estimation procedure via U-statistics}\label{ssec:est}

Following the so-called Pick-Freeze scheme, let $X^{\textbf{u}}$ be the random  vector such
that $X^{\textbf{u}}_i=X_i$ if $i\in \textbf{u}$ and $X^{\textbf{u}}_i=X'_i$ if $i\notin \textbf{u}$ where $X'_i$ is an independent copy of $X_i$. Then, setting 
\begin{align}\label{def:Yv}
Z^{\textbf{u}}= f(X^{\textbf{u}}),
\end{align}
 a direct computation leads to the following relationship  (see, e.g., \cite{janon2012asymptotic}):
\begin{align*}
\Var (\mathbb{E}[T_a(Z)|X_{\textbf{u}}]) =\Cov\left(T_a(Z),T_a(Z^{\textbf{u}})\right).
\end{align*}
Now let us define $\ZZ=(Z,Z^{\textbf{u}})^{\top}$ and  consider $(m+2)$ i.i.d.\ copies of $\ZZ$ denoted by $(\ZZ_i,i=1,\dots,m+2)$. In the sequel, $\P^{\textbf{u}}_2$ stands for the law of $\ZZ=(Z,Z^{\textbf{u}})^{\top}$. Setting $A=(Z_1,\dots,Z_m)$. Then the integrand in the numerator of \eqref{eq:GIM} rewrites as 
\begin{align*}
\E\left[\left(\E[T_A(Z)]-\E[T_A(Z)|X_{\textbf{u}}]\right)^{2}\right]&=\E_A\left[\Var_{X_{\textbf{u}}} (\mathbb{E}_{Z_{m+1}}[T_A(Z_{m+1})|X_{\textbf{u}}]) \right]\nonumber\\
&=\E_A\left[\Cov_{\ZZ_{m+1}}(T_A(Z_{m+1}),T_A(Z_{m+1}^{\textbf{u}}))\right].
\end{align*}
Here the notation $\E_{Z}$ (resp. $\Var_Z$ and $\Cov_Z$) stands for the expectation (resp. the variance and the covariance) with respect to the law of the random variable $Z$.

Now, for any $1\leqslant i\leqslant m+2$, we let $\z_i=(z_i,z_i^{\textbf{u}})^{\top}$ and we define
\begin{align*}
&\Phi_1(\z_1,\dots,\z_{m+1})= T_{z_1,\dots,z_m}(z_{m+1})T_{z_1,\dots,z_m}(z_{m+1}^{\textbf{u}})\\
&\Phi_2(\z_1,\dots,\z_{m+2})= T_{z_1,\dots,z_m}(z_{m+1})T_{z_1,\dots,z_m}(z_{m+2}^{\textbf{u}})\\
&\Phi_3(\z_1,\dots,\z_{m+1})= T_{z_1,\dots,z_m}(z_{m+1})^2\\
&\Phi_4(\z_1,\dots,\z_{m+2})= T_{z_1,\dots,z_m}(z_{m+1})T_{z_1,\dots,z_m}(z_{m+2}).
\end{align*}
Further, we set 
\begin{align}\label{def:m}
M(1)=M(3)=m+1 \quad \textrm{and} \quad  M(2)=M(4)=m+2
\end{align} 
and we define, for $j=1,\dots,4$,
\begin{align}\label{def:I}
&I(\Phi_j)=\int_{\mathcal X^{M(j)}}\Phi_j(\z_1,\dots,\z_{M(j)})d\P_2^{u,\otimes M(j)}(\z_1\dots,\z_{M(j)}).
\end{align}
Finally, we introduce  the application $\Psi$ from $\R^4$ to $\R$ defined by
\begin{equation}\label{def:psi}
\begin{matrix}
\Psi:& \mathcal \R^4 & \to & \R\\
& (x,y,z,t) & \mapsto & \frac{x-y}{z-t}.
\end{matrix}
\end{equation}
Then, $S_{2,GMS}^{\textbf{u}}$ can be rewritten as 
\begin{align}
S_{2,GMS}^{\textbf{u}}&=
\Psi\left(I(\Phi_1),I(\Phi_2),I(\Phi_3),I(\Phi_4) \right). \label{eq:USTAT2}
\end{align}
The previous expression of $S_{2,GMS}^{\textbf{u}}$ will allow to perform easly its estimation. Following Hoeffding \cite{Hoeffding48},  we replace  the functions $\Phi_1,\Phi_2$, $\Phi_3$ and $\Phi_4$ by their symmetrized version $\Phi_1^s,\Phi_2^s$, $\Phi_3^s $ and $\Phi_4^s $:
\begin{align*}
\Phi_j^s(\z_1,\dots,\z_{M(j)})=\frac{1}{(M(j))!} \sum_{\tau \in \mathcal S_{M(j)}} \Phi_j(\z_{\tau(1)},\dots,\z_{\tau(M(j))})
\end{align*}
for $j=1,\dots,4$ where $\mathcal S_{k}$ is the symmetric group of order $k$ (that is the set of all permutations on $I_k$).
For $j=1,\dots 4$, the integrals $I(\Phi_j^s)$ are naturally estimated by U-statistics of order $M(j)$. More precisely, we consider an i.i.d.\ sample $\left(\ZZ_1,\dots,\ZZ_N\right)$ ($N\geqslant 1$) with distribution $\P_2^{\textbf{u}}$ and, 
  for $j=1,\dots,4$, we define the U-statistics
\begin{align}\label{def:estU}
U_{j,N}&= \begin{pmatrix}N\\M(j)\end{pmatrix}^{-1}\sum_{1\leq i_1<\dots<i_{M(j)}\leq N}\Phi_j^s\left(
\ZZ_{i_1},\dots,\ZZ_{i_{M(j)}}
\right).
\end{align}
Theorem 7.1 in \cite{Hoeffding48} ensures that $U_{j,N}$ converges in probability to $I(\Phi_j)$ for any $j=1,\dots,4$. Moreover, one may also prove that the convergence holds almost surely proceeding as in the proof of Lemma 6.1 in \cite{GKL18}. 
Then we estimate $S_{2,GMS}^{\textbf{u}}$ by 
\begin{equation}\label{def:est}
\widehat{S}_{2,GMS}^{\textbf{u}}= \frac{U_{1,N}-U_{2,N}}{U_{3,N}-U_{4,N}}=\Psi(U_{1,N},U_{2,N},U_{3,N},U_{4,N}).
\end{equation}

\begin{rmk}
Naturally, a covariance quantity $\Cov(A,B)$ can be estimated using either the expression $\Cov(A,B)=\E[AB]-\E[A]\E[B]$ or 
the expression $\Cov(A,B)=\E[(A-\E[A])(B-\E[B])]$
leading to the following estimators:
\[
\frac 1N \sum_{i=1}^N A_iB_i - \left(\frac 1N \sum_{i=1}^N A_i\right)\left( \frac 1N \sum_{i=1}^N B_i\right) \quad \text{or} \quad \frac 1N \sum_{i=1}^N \left(A_i-\frac 1N \sum_{i=1}^N A_i\right)\left(B_i-\frac 1N \sum_{i=1}^N B_i\right)
\]
which are equal. 
The use of the right-hand side formula enables greater numerical stability (i.e., less error due to round-offs). The Kahan compensated summation algorithm \cite{kahan1965pracniques} may also be used on these sums. However, the left-hand side formula is generally preferred in sensitivity analysis for the mathematical analysis. This analysis is of course independent of the way the estimators are numerically computed in practice. The same holds for a variance term. 

Hence, the estimator $\widehat{S}_{2,GMS}^{\textbf{u}}$ defined in \eqref{def:est} can be rewritten in the fashion of the right-hand side of the previous display.  
\end{rmk}

Our main result follows.

\begin{theorem}\label{th:clt}
If for $j=1,\dots,4$, $\E\left[\Phi_j^s\left(\ZZ_1,\dots,\ZZ_{M(j)}\right)^2\right]<\infty$ then
\begin{align}\label{eq:clt}
\sqrt{N}\left(\widehat{S}_{2,GMS}^{\textbf{u}}-S_{2,GMS}^{\textbf{u}}\right)\cvloi\mathcal{N}_1(0,\sigma^2)
\end{align}
where the asymptotic variance $\sigma^2$ is given by \eqref{def:sigma} in the proof below.
\end{theorem}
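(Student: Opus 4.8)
The plan is to first establish a joint central limit theorem for the vector $U_N\defeq(U_{1,N},U_{2,N},U_{3,N},U_{4,N})^\top$ of U-statistics from \eqref{def:estU}, and then to transport it to $\widehat{S}_{2,GMS}^{\textbf{u}}$ by the Delta method applied to $\Psi$ of \eqref{def:psi}, using that $\widehat{S}_{2,GMS}^{\textbf{u}}=\Psi(U_N)$ while $S_{2,GMS}^{\textbf{u}}=\Psi(I)$ with $I\defeq(I(\Phi_1),I(\Phi_2),I(\Phi_3),I(\Phi_4))^\top$ (see \eqref{eq:USTAT2} and \eqref{def:est}). The delicate point is that the four U-statistics are built from the same i.i.d.\ sample $(\ZZ_1,\dots,\ZZ_N)$ but carry two different orders, $M(1)=M(3)=m+1$ and $M(2)=M(4)=m+2$. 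I would bypass this by lifting the two kernels of order $m+1$ to the common order $r\defeq m+2$: for $j\in\{1,3\}$ set
$$
\widetilde\Phi_j^s(\zz_1,\dots,\zz_r)\defeq\binom{r}{m+1}^{-1}\sum_{1\leq i_1<\dots<i_{m+1}\leq r}\Phi_j^s(\zz_{i_1},\dots,\zz_{i_{m+1}}),
$$
and $\widetilde\Phi_j^s\defeq\Phi_j^s$ for $j\in\{2,4\}$. A standard combinatorial identity then shows that $U_{j,N}$ coincides \emph{exactly} with the U-statistic of order $r$ with kernel $\widetilde\Phi_j^s$, and $\E[\widetilde\Phi_j^s(\ZZ_1,\dots,\ZZ_r)^2]<\infty$ follows from the hypothesis of the theorem by Jensen's inequality. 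Hence, for every fixed $c\in\R^4$, the linear combination $c^\top U_N$ is again a single, square-integrable, symmetric U-statistic of order $r$, with kernel $\sum_{j=1}^4 c_j\widetilde\Phi_j^s$, so Theorem 7.1 in \cite{Hoeffding48} gives $\sqrt N\,(c^\top U_N-c^\top I)\cvloi\mathcal N_1(0,v(c))$. A short computation of the first H\'ajek projection of the lifted kernels (using the tower property) yields $v(c)=\Var\!\big(\sum_{j=1}^4 c_j M(j)\,\Phi_{j,1}^s(\ZZ_1)\big)$, where $\Phi_{j,1}^s(\zz)\defeq\E[\Phi_j^s(\zz,\ZZ_2,\dots,\ZZ_{M(j)})]-I(\Phi_j)$. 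Since $v$ is a quadratic form in $c$, the Cram\'er--Wold device upgrades this to $\sqrt N\,(U_N-I)\cvloi\mathcal N_4(0,\Gamma)$ with $\Gamma_{jk}=M(j)M(k)\,\Cov\!\big(\Phi_{j,1}^s(\ZZ_1),\Phi_{k,1}^s(\ZZ_1)\big)$.

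For the second step I would observe that the denominator $I(\Phi_3)-I(\Phi_4)=\int_{\mathcal X^m}\Var(T_a(Z))\,d\P^{\otimes m}(a)$ is strictly positive — this is exactly the condition under which $S_{2,GMS}^{\textbf{u}}$ in \eqref{eq:GIM} is well defined — so $\Psi$ is continuously differentiable on a neighbourhood of $I$, with gradient $\nabla\Psi(x,y,z,t)=\big(\tfrac1{z-t},\,-\tfrac1{z-t},\,-\tfrac{x-y}{(z-t)^2},\,\tfrac{x-y}{(z-t)^2}\big)$. Together with the almost sure consistency $U_N\to I$ already recorded after \eqref{def:estU} (which allows one to localize around $I$), the Delta method applied to the joint CLT of the first step yields
$$
\sqrt N\,\big(\widehat{S}_{2,GMS}^{\textbf{u}}-S_{2,GMS}^{\textbf{u}}\big)\cvloi\mathcal N_1(0,\sigma^2),\qquad \sigma^2=\nabla\Psi(I)^\top\,\Gamma\,\nabla\Psi(I)=\Var\!\Big(\sum_{j=1}^4\partial_j\Psi(I)\,M(j)\,\Phi_{j,1}^s(\ZZ_1)\Big),
$$
which is the asymptotic variance announced in the statement.

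The only genuinely non-routine ingredient is the joint asymptotic normality of U-statistics of mixed orders built on one and the same sample; the reduction to the common order $r=m+2$ by kernel lifting is precisely what makes the argument "straightforward", since afterwards only Hoeffding's one-dimensional Theorem 7.1 and the Cram\'er--Wold device are needed. The care required there is to check that this reduction is an exact identity (so that no residual term is produced) and that square-integrability is preserved under lifting. Everything else — the explicit form of the H\'ajek projections $\Phi_{j,1}^s$ in terms of the test functions $T_a$ and of $\ZZ=(Z,Z^{\textbf{u}})^\top$, and the linear algebra turning $\nabla\Psi(I)^\top\Gamma\,\nabla\Psi(I)$ into a closed form — is routine bookkeeping that I would carry out when making $\sigma^2$ explicit.
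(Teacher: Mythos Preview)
Your proposal is correct and follows the same two-step strategy as the paper: a joint CLT for $(U_{1,N},\dots,U_{4,N})$ followed by the delta method applied to $\Psi$, yielding the same $\sigma^2=\nabla\Psi(I)^\top\Gamma\,\nabla\Psi(I)$. The only difference is that the paper obtains the joint CLT in one line by citing Hoeffding's Theorem~7.1 together with his equations (6.1)--(6.3), which already cover a vector of U-statistics of \emph{different} orders built on the same sample; your kernel-lifting to the common order $r=m+2$ plus Cram\'er--Wold is a correct and self-contained re-derivation of that very result, but it is not needed if one is willing to invoke Hoeffding's multivariate statement directly.
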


\begin{proof}[Proof of Theorem \ref{th:clt}]
The first step of the proof is to apply Theorem 7.1 of \cite{Hoeffding48} to the random vector $\left(U_{1,N},U_{2,N},U_{3,N},U_{4,N}\right)^{\top}$. By Theorem 7.1 and Equations (6.1)-(6.3) in \cite{Hoeffding48}, it follows that 
\[
\sqrt{N}\left(
\begin{pmatrix}U_{1,N}\\U_{2,N}\\U_{3,N}\\U_{4,N}\end{pmatrix}
-
\begin{pmatrix}I(\Phi_1^s)\\I(\Phi_2^s)\\I(\Phi_3^s)\\I(\Phi_4^s)\end{pmatrix}
\right)
\cvloi \mathcal{N}_4(0,\Gamma)
\] 
where $\Gamma$ is the square matrix of size $4$ given by 
\[
\Gamma(i,j)= M(i)M(j) \Cov(\E[\Phi_i^s(\ZZ_1,\dots,\ZZ_{M(i)})\vert \ZZ_1],\E[\Phi_j^s(\ZZ_1,\dots,\ZZ_{M(j)})\vert \ZZ_1]).
\]
Now, it remains to apply the so-called delta method (see \cite{van2000asymptotic}) with the function $\Psi$ defined by \eqref{def:psi}. Thus, one gets the asymptotic behavior in Theorem \ref{th:clt} where $\sigma^2$ is given by
\begin{align}\label{def:sigma}
\sigma^2= g^{\top} \Gamma g
\end{align}
with $g=\nabla \Psi(I(\Phi_1^s),I(\Phi_2^s),I(\Phi_3^s),I(\Phi_4^s))$ and $\nabla \Psi = (z-t)^{-2}\left( z-t, -z+t, -x+y, x-y\right)^{\top}$.
\end{proof}

Notice that we consider $(m+2)$ copies of $\ZZ$ in the definition of $S_{2,GMS}^{\textbf{u}}$ (see \eqref{eq:USTAT2}). Nevertheless, the estimation procedure only requires a $N$ sample of $\ZZ$ (see \eqref{def:est}) that means only $2N$ evaluations of the black-box code which constitutes an appealing advantage of the method presented in this paper. Moreover, the required number of calls to the black-box code is independent of the size $m$ of the class of tests functions unlike in \cite{GKL18} or in \cite{FGM2017} where $(m+2)\times N$ calls to the computer code were necessary. In addition,  the proof of the asymptotic normality in Theorem \ref{th:clt} is  elementary and does not rely anymore on the use of the sophisticated functional delta method as in \cite{GKL18}.

\subsection{Comments}\label{ssec:comments}

For any output code $f$, one may consider 
different choices of the family $(T_a)_{a\in \mathcal X^m}$ of functions indexed by $a\in \mathcal X^m$ leading to very different indices. The choice of the family must be induced by the aim of the practitioner. To quantify the output sensitivity around the mean, one should consider the classical Sobol indices based on the variance and corresponding to the first particular case presented in Section \ref{ssec:index}. Otherwise, interested in the sensitivity of the whole distribution, one should prefer a family of functions that characterizes the distribution.
For instance, in the second particular case presented in Section \ref{ssec:index}, the functions $T_a$ are the indicator functions of half-lines and yield the Cramér-von-Mises indices.

Moreover, since in the estimation procedure the number of output calls is independent of the choice of the family $(T_a)_{a\in \mathcal X^m}$, one can consider and estimate simultaneously several indices with no-extra cost. In fact, the only computational challenge relies in our capability to evaluate the functions $\Phi$ on the sample.

\section{Applications in classical frameworks and beyond}\label{sec:appli}

\subsection{Particular cases}

\paragraph{Sobol indices}

In the case where  $\mathcal X=\R$, $m=0$ and the test functions $T_a$ given by $T_a(x)=x$ (do not depend on the parameter $a$), we recover the  classical Sobol indices. As mentioned in the Introduction, many classical methods of estimation are available. Among them, one can cite estimation procedure based on polynomial chaos expansion \cite{Sudret2008global}, quasi Monte-Carlo scheme \cite{Kucherenko2017different,owen2},  the classical  Pick-Freeze method \cite{pickfreeze,janon2012asymptotic}, and more recently a method based on nested Monte-Carlo \cite{GODA201763}. This last method seems to be numerically efficient. Nevertheless, it requires that all the random elements have a density with respect to the Lebesgue measure to be able to simulate under the conditional distribution. In addition, no theoretical asymptotic convergences are given.

\medskip

As explained in Section \ref{ssec:est}, our method provides 
a new estimator based on U-statistics for the classical Sobol index. In that case,
the estimator is given by \eqref{def:est} and, for $j=1,\ldots,4$, the $U_{j,N}$'s are given by
\begin{align*}
U_{1,N}&= \frac 1N \sum_{i=1}^N Z_iZ_i^{\textbf{u}}\\
U_{2,N}&= \frac{1}{N(N-1)}\left( \sum_{i=1}^N Z_i\sum_{i=1}^N Z_i^{\textbf{u}}-\sum_{i=1}^N Z_iZ_i^{\textbf{u}}\right)\eqdef \frac{1}{N(N-1)}(\tilde U_{2,N}-\tilde V_{2,N})\\
U_{3,N}&= \frac 1N \sum_{i=1}^N Z_i^2\\
U_{4,N}&= \frac{1}{N(N-1)}\left( \left(\sum_{i=1}^N Z_i\right)^2-\sum_{i=1}^N Z_i^2\right)\eqdef \frac{1}{N(N-1)}(\tilde U_{4,N}-\tilde V_{4,N})
\end{align*}
leading to
\begin{equation*}
\widehat S^{\textbf{u}}_{2,GMS}= \frac{U_{1,N}-U_{2,N}}{U_{3,N}-U_{4,N}}=\Psi(U_{1,N},U_{2,N},U_{3,N},U_{4,N})
\end{equation*}
while in \cite{pickfreeze}, the classical Pick-Freeze estimator $\widehat S^{\textbf{u}}$ of $S^{\textbf{u}}_{2,GMS}$ is given by 
\begin{equation}\label{def:est_R}
\widehat S^{\textbf{u}}= \frac{U_{1,N}-(1-1/N^2) \tilde U_{2,N}}{U_{3,N}-(1-1/N^2) \tilde U_{4,N}}=\Psi(U_{1,N},(1-1/N^2) \tilde U_{2,N},U_{3,N},(1-1/N^2) \tilde U_{4,N})
\end{equation}
and takes into account the diagonal terms. Both procedures require $2N$ evaluations of the black-box code and have the same rate of convergence. The estimators are slightly different which induces different asymptotic variances.
Finally, one may improve the estimation $\widehat S^{\textbf{u}}$ using the information of the whole sample leading to $\widehat T^{\textbf{u}}$ given in \cite[Equation (6)]{pickfreeze}:
\begin{align}\label{def:Tn}
\widehat T^{\textbf{u}}=\frac{
\frac{1}{N}\sum_{i=1}^NY_iY_i^{\textbf{u}}-\left(\frac{1}{N}\sum_{j=1}^N\frac{Y_i+Y_i^{\textbf{u}}}{2}\right)^2}
{
\frac{1}{N}\sum_{i=1}^N\frac{(Y_i)^2+(Y_i^{\textbf{u}})^2}{2}-\left(\frac{1}{N}\sum_{i=1}^N\frac{Y_i+Y_i^{\textbf{u}}}{2}\right)^2
}.
\end{align}
The sequence of estimators $\widehat T^{\textbf{u}}$ is asymptotically efficient in the Cramér-Rao sense (see \cite[Proposition 2.5]{pickfreeze}). In this paper, we also could have constructed a new estimator $\widehat T^{\textbf{u}}_{2,GMS}$ analog version of $\widehat S^{\textbf{u}}_{2,GMS}$ taking into account the whole information contained in the sample. However, based on the same initial design as $\widehat S^{\textbf{u}}$ and $\widehat T^{\textbf{u}}$, neither $\widehat S^{\textbf{u}}_{2,GMS}$ nor $\widehat T^{\textbf{u}}_{2,GMS}$ will be asymptotically efficient. Nevertheless, the estimation procedure proposed in this paper outperforms the procedure presented in \cite{GKL18,FGM2017} as soon as $m\geqslant 1$.

\paragraph{Sobol indices for multivariate outputs} For $\mathcal X=\R^k$ and $m=0$, one may realize the same analogy between the estimation procedure proposed in this paper and that in \cite{GJKL14}.

\paragraph{Cramér-von-Mises indices} For $\mathcal X=\R^k$, $m=1$ and the test functions $T_a$ given by $T_a(x)=\ind_{\{x\leqslant a\}}$, we outperform the central limit theorem proved in \cite{GKL18}. Indeed, the estimator proposed in \cite{GKL18} requires $3N$ evaluations of the computer code versus only $2N$ in our new procedure. In addition, the proof therein is based on the powerful but complex functional delta method while the proof of Theorem \ref{th:clt} is an elementary application of Theorem 7.1 in \cite{Hoeffding48} combined with the classical delta method.

\subsection{Compact manifolds}

A particular framework is the case when the output space  is a compact Riemannian manifold $\mathcal{M}$. In \cite{FGM2017}, a similar index to $S_{2,GMS}^{\textbf{u}}$ is studied in this special context, taking $T_a(x)=\ind_{\{x\in \widetilde B(a_1,a_2)\}}$ as test functions, where $\widetilde B(a_1,a_2)$ still stands for the ball whose center is the middle point between $a_1$ and $a_2$ with radius $\overline{a_1a_2}/2$.
The authors showed that, under some restrictions on the underlying probability measure and the Riemannian manifold, the  family of balls $\bigl( \widetilde B(a_1,a_2) \bigr)_{(a_1,a_2) \in \mathcal{M}}$ is a determining class, that is, if two probability measures $\P_1$ and $\P_2$ on $\mathcal{M}$  coincide on all the events of this family, then $\P_1=\P_2$. By this property, they proved that if their index, denoted $B_2^{\textbf{u}}$, vanishes 
 then the distributions of $T_a(Z)$ and  $(T_a(Z)|X_{\textbf{u}})$ coincide. Further, the performance of $B_2^{\textbf{u}}$ in Riemannian manifolds immersed in $\mathbb{R}^d$ with $d = 2,3$ and on the cone of positive definite matrices (manifold) is analyzed. 
Last, an exponential inequality for the  estimator $\hat{B}_2^{\textbf{u}}$ of ${B}_2^{\textbf{u}}$ is provided together with the almost sure convergence  that is deduced from. Unfortunately, no central limit theorem is given. 

As a particular case of $S_{2,GMS}^{\textbf{u}}$, the asymptotic distribution of $\hat{B}_2^{\textbf{u}}$ can be found from Theorem \ref{th:clt}.  Given $x$, since $(a_1,a_2)\mapsto T_{(a_1,a_2)}(x)$  is a symmetric function and  $m=2$, it is verified that,
\begin{align*}
& \Phi_1(\z_1,\z_2,\z_3)= \ind_{ \left \{ \z_3, \z_{3}^{\textbf{u}}  \in \widetilde  B(\z_1,\z_2) \right \}},\\ 
& \Phi_2(\z_1,\z_2,\z_3,\z_4)= \ind_{\left \{\z_3, \z_{4}^{\textbf{u}}  \in \widetilde B(\z_1,\z_2) \right \}}, \\
& \Phi_3(\z_1,\z_2,\z_3)=  \ind_{\left \{\z_3 \in \widetilde B(\z_1,\z_2) \right \} } ,\\
& \Phi_4(\z_1,\z_2,\z_3,\z_4)=  \ind_{\left \{\z_3, \z_{4}  \in \widetilde B(\z_1,\z_2) \right \}}. 
\end{align*}

In this setting, the limiting covariance matrix $\Gamma$ is given by $\Gamma(i,j)= M(i)M(j)\Cov \left( L_i, L_j \right)$, for $i,j=1,\ldots,4$ where
\begin{align*}
L_1&= \frac{1}{6} \sum_{\tau \in \mathcal S_{3}}  \mathbb{P}  \left ( Z_{\tau_3}, Z_{\tau_3}^{\textbf{u}}  \in  \widetilde B(Z_{\tau_1},Z_{\tau_2}) \vert Z_1 \right ), \\ 
L_2&= \frac{1}{24} \sum_{\tau \in \mathcal S_{4}}  \mathbb{P}  \left (   Z_{\tau_3}, Z_{\tau_4}^{\textbf{u}}  \in \widetilde B(Z_{\tau_1},Z_{\tau_2}) \vert  Z_1 \right ), \\
L_3 &= \frac{1}{6} \sum_{\tau \in \mathcal S_{3}}  \mathbb{P} \left ( Z_{\tau_3}  \in \widetilde B(Z_{\tau_1},Z_{\tau_2}) \vert Z_1   ) \right ),\\
L_4&= \frac{1}{24} \sum_{\tau \in \mathcal S_{4}}  \mathbb{P} \left (   Z_{\tau_3}, Z_{\tau_4}  \in \widetilde B(Z_{\tau_1},Z_{\tau_2}) \vert  Z_1 \right ).
\end{align*}


\section{Numerical applications}\label{sec:num}

%
%
%
%
%

\subsection{A non linear model}

In this section, we illustrate and we compare the different estimation procedures based on the Pick-Freeze scheme and the U-statistics for the classical Sobol indices 
on the following toy model:
\begin{equation}\label{eq:exp1}
Z=\exp\{ X_1+ 2X_2\}, 
\end{equation}
where $X_1$ and $X_2$ are independent standard Gaussian random variables. The distribution of $Z$ is log-normal and we can derive both its probability density function and its c.d.f.\:
\[
f_Z(z)=\frac{1}{\sqrt{10\pi}z}e^{-(\ln z)^2/10}\ind_{\R_+}(z)\quad \trm{and} \quad F_Z(z)=\Phi\left(\frac{\ln z}{\sqrt 5}\right),
\]
where $\Phi$ stands for the c.d.f.\ of the standard Gaussian random variable. 
We have $p=2$ input variables and tedious exact computations  (see \cite{GKL18}) lead to closed forms of the Sobol indices:
\begin{align*}
S^1=\frac{1-e^{-1}}{e^4-1}\approx 0.0118     \quad \text{and} \quad S^2=\frac{e^3-e^{-3}}{e^4-1}\approx 0.3738.
\end{align*}
Further, the Cramér-von-Mises indices $S_{2,CVM}^{1}$ and $S_{2,CVM}^{2}$ are also explicitly computable:
\begin{align*}
S_{2,CVM}^{1}=\frac{6}{\pi}\arctan 2-2\approx 0.1145 \quad \text{and} \quad
S_{2,CVM}^2=\frac{6}{\pi}\arctan \sqrt{19}-2\approx 0.5693. 
\end{align*}
The reader is refered to \cite{GKL18} for the details of these computations. 

In Figure \ref{fig:jouet_N_increases}, we compare the estimations of the two first-order Sobol indices 
obtained by both estimation procedures (U-statistics and Pick-Freeze). The total number of calls of the computer code ranges from $n=100$ to $500000$. When estimating the Sobol indices with both methodologies, 
we have considered samples of size $N=n/(p+1)$ so that each estimation requires a total number $n$ of evaluations of the code.   
Analogously, when estimating the Cramér-von-Mises indices using U-statistics, we have also considered samples of size $N=n/(p+1)$.  
In contrast, 
when estimating the Cramér-von-Mises indices using the Pick-Freeze scheme, we have considered 
samples of size $N=n/(p+2)$. 
 We observe that both methods converge and give precise results for large sample sizes. The same kind of convergence can be observed for the estimations of the Cramér-von-Mises indices with both methodologies. Actually, the convergence is a bit slower which is not surprising due to the greater complexity of the Cramér-von-Mises indices. 
In addition, the estimation procedure with U-statistics outperforms the Pick-Freeze one as soon as $m\geqslant 1$. As already mentioned in Sections \ref{ssec:est} and \ref{ssec:comments}, such a better performance increases with the number $m$ of parameters of the tests functions family. Indeed, for a fixed budget $n$ (in other words, a fixed number of evaluations of the computer code), the needed sample size to estimate the $p$ first-order indices with the standard Pick-Freeze scheme is $N_1=n/(m+p+1)$ to be compared to $N_2=n/(p+2)$ required in the $U$-statistics estimation procedure.  

\begin{figure}[h!]
\centering
\begin{tabular}{l}
\includegraphics[scale=.92]{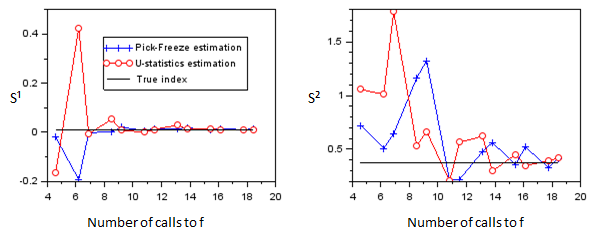}
\end{tabular}
\caption{Non-linear model \eqref{eq:exp1}. Convergence of both methods when the total number of calls of the computer code increases. The two first-order Sobol indices have been represented from left  to right. Several total number of calls of the computer code have been considered ranging from $n=100$ to $n=10^8$. The $x$-axis is in logarithmic scale.
}
\label{fig:jouet_N_increases}
\end{figure}

\subsection{The Gaussian plume model}\label{ssec:plume}

In this section, the model under study concerns  a point source that emits contaminant into a uni-directional wind in an infinite domain. Such a model is also applied, for instance, to volcanic eruptions, pollen and insect dispersals, and is called the Gaussian plume model (GPM) (see, e.g., \cite{carrascal1993,stockie2011}). The GPM assumes that atmospheric turbulence is stationary and homogeneous.  
Naturally, in Earth Sciences, it is crucial to analyze the sensitivity of the output of the GPM model regarding the input parameters (see \cite{mahanta2012,pouget2016}).

The model parameters are represented in Figure \ref{pluma}. The contaminant is emitted at a constant rate $Q$ and the wind direction is denoted by $\textbf{u}=(u,0,0)$ (with $u \geq 0)$ while the effective height is $H=h(1+\delta)$ where $h$ is the stack height and $\delta h$ is the plume rise.

\begin{figure}[ht]\centering 
\includegraphics[width=.49\textwidth]{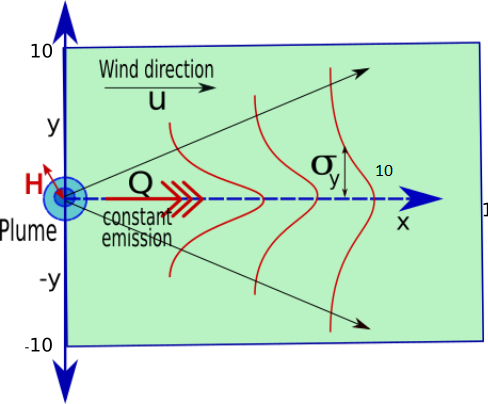}
\caption{Plume model. Cross section at $z=0$ of a contaminant plume emitted from a continuous point source, with wind direction aligned with the $x$–axis.}
\label{pluma}
\end{figure}

Then the contaminant concentration at location $(x,y,z)$ is given by
\[
C(x,y,z)= \frac{Q}{4 \pi u r(x)} e^{-\frac{y^2}{4r(x)}}  \left(e^{-\frac{(z-H)^2}{4r(x)}} + e^{-\frac{(z+H)^2}{4r(x)}}  \right),
\]
where $r$ is a  parametric function given by $r(x)= \frac{1}{u}\int_0^x K(v)dv$, the function $K$ being the eddy diffusion. 
In this section, we investigate the particular two-dimensional case: the height is considered as zero (at ground level). 
In addition, we suppose that $r(x)=Kx/u$ where $K$ is a constant. Hence, the contaminant concentration at location $(x,y,0)$ rewrites as: 
\begin{align}\label{def:plume}
C(x,y,0)= \frac{Q}{2\pi K x} e^{\frac{-u(y^2 +H^2)}{4Kx}}.
\end{align}

A first step consists in performing a GSA for spatial data, namely an ubiquous sensitivity analysis. In other words, the sensitivity indices are computed location after location leading to a sensitivity map. See, for instance,   \cite{marrel2017sensitivity} for more details on this methodology. The results are presented in Figure \ref{fig:plume}.

\begin{figure}[h!]
\centering
\begin{tabular}{l}
\includegraphics[scale=1.2]{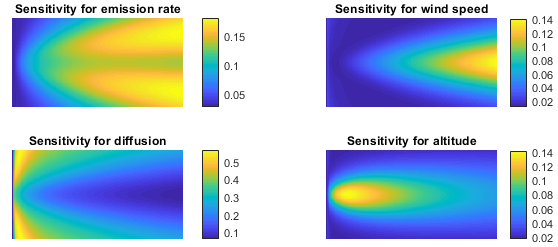}
\end{tabular}
\caption{Plume model \eqref{def:plume}. Ubiquous sensitivity analysis with respect to the emission rate $Q$ (top left), the wind speed $u$ (top right), the diffusion $K$ (bottom left) and the altitude $H$ (bottom right). 
}
\label{fig:plume}
\end{figure}

Secondly,  we wish to perform a sensitivity analysis globally on the contaminant concentration with respect to the uncertain inputs $Q$, $K$, and $u$, while the altitude plume parameter $H$ is fixed in advance. In this setting, the function $f$ that defines the output of interest in \eqref{def:model} is then given by:
\begin{align}\label{def:plume_as}
 \begin{matrix}
f\colon &\R^3 &\to &L^2(\R^2)\\
&(Q,K,u)&\mapsto &f(Q,K,u)=\left(C(x,y,0)\right)_{(x,y)\in \R^2}.
\end{matrix}
\end{align}
In other words, to any triplet $(Q,K,u)$, the computer code associates one square-integrable field from $\R^2$ to $\R$.  
 Based on reality constraints and guided by the expert knowledge, the stochastic  parameters $Q$, $K$, and $u$ of the model are assumed to be all independent with uniform distribution on $[0,10]$. Let $C_1$ and $C_2$ be two pollution concentrations  with domain in the ground level. The range of values of $x$ and $y$ where $C_1$ and $C_2$ are compared is $A=\{ (x,y) \in \mathbb{R}^2 : x \in [0,10], y \in [-10,10] \}$. The distance  used  is the classical $L^2$ distance 
\[
d(C_1,C_2)=\sqrt{\iint (C_1(x,y,0)-C_2(x,y,0))^2 dxdy}.
\] 

To quantify the sensitivity on the contaminant concentration with respect to $Q$, $K$, and $u$, we consider the family of test functions $T_a$ given by $T_{(a_1,a_2)}(b)= \ind_{b \in B(a_1,a_2)}$, where $a_1$, $a_2$, and $b$ are square-integrable applications from $\R^2$ to $\R$ and $B(a_1,a_2)$ stands for the ball centered at $a_1$ with radius $\overline{a_1a_2}$ (whence $m = 2$). The values of the indices are presented in Table  \ref{power_norm}. In this study, we have considered several values of the altitude plume parameter $H$ from 1 to 20 and a sample size $N$ equal to 1000, 2000, and 5000. 
We observe that,  as $H$  increases, the values of the sensitivity indices decrease. When $N=5000$, we may also observe that the rank of the indices largely varies with respect to the value of $H$: for large values of $H$, the parameter $K$ appears to be the most influent 
on the concentration. In contrast, when $H=1$, all three parameters seem to have the same influence.

%

  \begin{table}[ht]
\begin{center}
\scalebox{0.90}{
\begin{tabular}{l | ccc | ccc| ccc}

\multicolumn{1}{c}{}  &  \multicolumn{3}{c}{{N=1000}} & \multicolumn{3}{|c}{{N=2000}}  & \multicolumn{3}{|c}{{N=5000}}          \\
\cline{2-10}
\multicolumn{1}{c}{} &  {K}  & {Q }& {u} & {K} & {Q} &  {u} & {K} & {Q} & {u}                \\
\hline
  {H=1}  & 0.1365    & 0.1216  & 0.1330  & 0.1124 	        & 0.1419 	 	    &0.1453 & 0.1425	& 0.1431		& 0.1562 \\
  {H=2}  & 0.1028    &0.1197    & 0.1212    & 0.1291 	    & 0.1317	        &0.1171 		& 0.1222	        & 0.1627		& 0.1143\\
  {H=10} & 0.0813    & 0.0891 & 0.1010  &0.1081       	& 0.1077 		    &0.1256  	& 0.0893 	& 0.0831 	& 0.1001\\
{H=20}   & 0.1027     & 0.0246    & 0.1041    & 0.0620 	& 0.0942	 	&0.1030  	& 0.0913 	         & 0.0091 		& 0.0329\\
\end{tabular}}

\caption{Sensitivity indices for the plume model \eqref{def:plume}}

\label{power_norm}
\end{center}
\end{table}

\subsection{Singular value decomposition in partial differential equation}\label{ssec:svd}

In this section, we study the sensitivity of the solution (numerical approximation) of a partial differential equation, when the parameters of the equation (inputs) vary. In particular, we analyze the sensitivity of the subspaces generated by the singular value decomposition  of the numerical grid output matrix solution. Many problems can be modelled by an elliptical differential partial equation. For instance, in physics, electric potential, potential flow, structural mechanics are all studied, see \cite{sobolev2016}. In biology, the reaction–diffusion–advection equation is used to model chemotaxis observed in bacteria, population migration and  evolutionary adaptation to changing environments, see \cite{volpert2011}. 

In this setting, it is usual to compact the information through the singular value decomposition of the solution matrix, that is, the numerical solution of the differential equation.  Furthermore, it can also be useful to analyze the influence of the parameters in this information compactification.

In this section,  an elliptical  differential partial equation of type diffusive transport is considered:
\begin{align}\label{eq:ex2}
B \frac{\partial C}{\partial t}= \frac{\partial}{\partial x} \left[D \frac{\partial C}{\partial x}\right] + \frac{\partial}{\partial y} \left[D \frac{\partial C}{\partial y}\right]  -rC +p_{xy}, 
\end{align}
with production rate $p_{xy}$ at location $(x,y)$, consumption rate $r$, and diffusive transport $D$ of a substance $C$ in time $t$ and spatial dimensions $(x, y)$. The boundaries are prescribed as zero-gradient (default value). The parameter $p_{xy}$ is zero everywhere except in $50$ randomly positioned spots denoted by $(x_i,y_i)$, for $i=1,\ldots, 50$.
We assume that  the production rate is the same at any of the 50 locations and equal to $p$ and we consider that the function $f$ in \eqref{def:model} defining the output $C$ is given by 
\begin{align}\label{def:svd_as}
 \begin{matrix}
f\colon &\R^4 &\to &L^2(\R_+\times \R^2)\\
&(B,D,r,p)&\mapsto &f(B,D,r,p)=\left(C(t,x,y)\right)_{(t,x,y)\in \R_+\times \R^2}.
\end{matrix}
\end{align}
All the input parameters are then assumed to be uniformly distributed:
\begin{align*}
B &\sim \mathcal U([1-\beta,1+\beta]), \\
D &\sim \mathcal U([2-\delta,2+\delta]), \\
r &\sim \gamma \cdot{}  \mathcal U([1,2]), \\
p &\sim \mathcal U([0,1]).
\end{align*}
Let $C(0,x,y)$ be the solution of \eqref{eq:ex2} at time $t=0$. We compute the matrix $A$ that has the first two principal component scores along its columns. Note that these two columns represent a rank-2 approximation of the matrix solution. This matrix $A$ is a way to embed  the approximated solution on a Stiefel manifold $S_t$. That is,  $A \in S_t= \{ M \in\mathcal M_{50,2}: M^\top M=Id \}$, where $\mathcal M_{n,k}$ stands for the set of matrices of size $n\times k$. We consider the Stiefel manifold as an embedded one into the Euclidean space, and we choose the standard inner product in this space (Frobenius product) as metric in the Riemannian manifold. Notice that it is also possible to select another metric in $S_t$. Therefore,
the similarity between two matrices is given by the Frobenius distance, that is, for any matrices $A_1$ and $B_2 \in \mathcal M_{n,k}$, 
\[
d(A_1,A_2)= \sqrt{ \tr ((A_1-A_2)^\top (A_1-A_2))},
\]
 where $\tr(A)$ represents the trace of the matrix $A$.
We consider the parametric family of functions given by
\[
T_{(A_1,A_2)}(\cdot{} )= \ind_{\cdot{} \in B(A_1,A_2) \cap B(A_2, A_1)},
\]
where the parameters $A_1$ and $A_2$ of the test functions are now matrices (thus are written with capital letters) and $B(A_1, A_2)$ (resp. $B(A_2, A_1)$) still stands for the ball centered at $A_1$ (resp. $A_2$)
with radius $\overline{A_1A_2}$.
In Table \ref{power_norm2}, the sensitivity indices are calculated for different values of $\beta$, $\delta$, and $\gamma$ and the high influence of the parameter $r$ is observed in all cases. As expected, this influence increases with $\gamma$  and decreases as the value of $\delta$ increases.  The simulations have been generated using the R language \cite{r2019}. In particular, the discretized solution of the differential equation has been computed with the \texttt{ReacTran} package \cite{soetaert2012}. 

\begin{table}[ht]
\begin{center}
\scalebox{0.95}{
\begin{tabular}{c | ccc | ccc}
  &  \multicolumn{3}{c|}{{$\delta=0.1$}} & \multicolumn{3}{c}{{$\delta=0.5$}}            \\
\hline
{$\gamma=0.001$} &  { $B$}  & { $D$ }& { $r$}  &  {$B$}  & {$D$}& {$r$}   \\
  \hline
  {$\beta=0.1$}  &  0.001  & 0.011  & 0.546   	        & 0.020	 	    &0.071 	& 	0.119 	    	 \\
  {$\beta=0.5$}   & 0.010   & 0.007    &  0.491    	    & 0.000	        &0.041 	 & 0.102 	        \\
  \hline
  \hline
&  \multicolumn{3}{c|}{{$\delta=0.1$}} & \multicolumn{3}{c}{{$\delta=0.5$}}            \\
\hline
{$\gamma=0.01$}  &  {$B$}  & {$D$}& {$r$}  &  {$B$}  & {$D$}& {$r$}   \\
\hline
  {$\beta=0.1$}  & 0.000   & 0.001    & 0.664      	        & 0.010 	 	    &0.053 	& 0.168 	 	    		 \\
  {$\beta=0.5$}   & 0.013   &0.006    & 0.621    	    & 0.008	        &0.041 	& 0.132 	 	   		\\
\hline
\hline
 &  \multicolumn{3}{c|}{{$\delta=0.1$}} & \multicolumn{3}{c}{{$\delta=0.5$}}            \\
\hline
{$\gamma=0.1$} &  {$B$}  & {$D$}& {$r$}  &  {$B$}  & {$D$}& {$r$}    \\
\hline
  {$\beta=0.1$}  & 0.005   &0.005    & 0.794     	        & 0.020 	 	    &0.051 	& 0.179  	 \\
  {$\beta=0.5$}   & 0.000   &0.006    & 0.721   	    & 0.000	        &0.043 	& 0.171 	 	   \\

\end{tabular}}
\caption{Sensitivity indices for the partial differential equation \eqref{eq:ex2}}

\label{power_norm2}
\end{center}
\end{table}

\section{Conclusion}\label{sec:concl}

In this paper, we explain how to construct a large variety of sensibility indices when the output space of the black-box model is a general metric space. This construction encompasses the classical Sobol indices  \cite{janon2012asymptotic} and their vectorial generalization \cite{GJKL14} as well as some indices based on the whole distribution, namely the Cramér-von-Mises indices \cite{GKL18}. In addition, we propose an estimation procedure  that ensures strong consistency and asymptotic normality at a cost of $2N$ calls to the computer code with a rate of convergence  $\sqrt N$.
As soon as $m\geqslant 1$, this new methodology appears to be more efficient than the so-called Pick-Freeze estimation procedure.

\medskip

\textbf{Acknowledgment}. We warmly thank Anthony Nouy and Bertrand Iooss for the numerical examples of Sections \ref{ssec:plume} and \ref{ssec:svd}. Support from the ANR-3IA Artificial and Natural Intelligence Toulouse Institute is gratefully acknowledged.
The authors are indebted to the anonymous reviewers for their helpful comments
and suggestions, that lead to an improvement of the manuscript. In particular, we warmly thank one of the reviewers for pointing us the interest of performing an ubiquous sensitivity analysis in Section \ref{ssec:plume}.

%
%
%
%


 \bibliographystyle{abbrv}
\bibliography{biblio_GMS}

\end{document}